\theoremstyle{definition}
\newtheorem{definition}{Definition}[section]
\newtheorem{remark}[definition]{Remark}
\theoremstyle{plain}
\newtheorem{theorem}[definition]{Theorem}
\newtheorem{corollary}[definition]{Corollary}
\newtheorem{lemma}[definition]{Lemma}
\newtheorem{proposition}[definition]{Proposition}
\newcommand{\Z}{\mathbb{Z}}
\newcommand{\B}{\{0,1\}}
\newcommand{\Ham}{d_H}
\newcommand{\comp}{\mathrm{comp}}
\newcommand{\rev}{\mathrm{rev}}
\title{\textbf{Optimal Equivariant Matchings on the 6-Cube}\\[0.3em]
\large With an Application to the King Wen Sequence}
\author{Alejandro Radisic\\[0.3em]\texttt{aleloid@proton.me}}
\date{\today}
\begin{document}
\maketitle

\begin{abstract}
We study equivariant perfect matchings on the Boolean hypercube $\B^6$ under the Klein four-group $K_4 = \langle \comp, \rev \rangle$ generated by bitwise complement and reversal. Among matchings using only $\comp$ or $\rev$ pairings, there is a unique Hamming-cost minimizer, given by a simple ``reverse-priority rule'': pair each element with its reversal unless it is a palindrome, in which case pair it with its complement. This matching has total Hamming cost 120, compared to 192 for the complement-only matching. The historically significant King Wen sequence of the I Ching realizes precisely this matching. Pure Hamming minimization over the full $K_4$ action is different: allowing $\comp \circ \rev$ lowers the cost to 96. The King Wen rule is recovered, however, as the unique Hamming-weight-preserving optimum: it minimizes failures of Hamming-weight preservation before Hamming distance, and it is stable for the weighted energy $\alpha|\Delta w|+\beta d_H$ throughout the open region $\alpha>\beta$. The finite orbit counts and case distinctions are checked in Lean~4.
\end{abstract}

\section{Introduction}

The Boolean hypercube $\B^n = \{0,1\}^n$ admits two natural involutions: \emph{bitwise complement} $\comp$, which flips each bit ($0 \leftrightarrow 1$), and \emph{bit reversal} $\rev$, which reverses bit order ($b_0 b_1 \cdots b_{n-1} \mapsto b_{n-1} \cdots b_1 b_0$). Together these generate the Klein four-group $K_4 = \{\mathrm{id}, \comp, \rev, \comp \circ \rev\}$.

A \emph{$\comp/\rev$ matching} is a perfect matching where each pair $\{h, h'\}$ satisfies either $h' = \comp(h)$ or $h' = \rev(h)$. More generally, a \emph{$K_4$-equivariant matching} also permits $h' = \comp(\rev(h))$.

This paper addresses the optimization problem:
\begin{quote}
\emph{Which equivariant matchings on $\B^6$ are selected by Hamming distance, and which are selected when Hamming distance is balanced against conservation of Hamming weight?}
\end{quote}

For $n = 6$, one might expect the $\comp/\rev$ problem to be trivial: pair each element with the cheaper option. But in most equivariant matching problems, locally optimal choices conflict globally---choosing the cheapest partner for $h$ may force a suboptimal choice elsewhere. Here, no such conflicts arise inside the $\comp/\rev$ class. The key is Lemma~\ref{lem:noconflict}, which shows that $\comp$ and $\rev$ never genuinely compete---whenever they differ in cost, $\rev$ is strictly cheaper.

As an application, we verify that the King Wen sequence---a traditional ordering of the 64 hexagrams of the I Ching into 32 pairs---is isomorphic to this unique $\comp/\rev$ optimum. Relaxing to the full $K_4$ action changes the pure Hamming problem: $\comp \circ \rev$ is then preferred on some orbits, reducing the cost from 120 to 96. This does not contradict the King Wen rule; rather, it reveals the additional principle the rule is optimizing. Reversal preserves Hamming weight, while complement and $\comp \circ \rev$ generally change it. Thus King Wen is naturally characterized by a Hamming-weight preservation principle, or equivalently by a phase-stable weighted variational principle.

\subsection{Main Results}

\begin{theorem}[Uniqueness of Optimal $\comp/\rev$ Matching]\label{thm:main}
For $n = 6$, there exists a unique $K_4$-equivariant perfect matching on $\B^6$ minimizing total Hamming distance among matchings where each pair is either $\{h, \comp(h)\}$ or $\{h, \rev(h)\}$. This matching is given by the \emph{reverse-priority rule}:
\[
\mathrm{partner}(h) =
\begin{cases}
\rev(h) & \text{if } h \neq \rev(h) \\
\comp(h) & \text{if } h = \rev(h)
\end{cases}
\]
The total Hamming cost is $120$, compared to $192$ for the complement-only matching.
\end{theorem}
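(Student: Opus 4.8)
The plan is to reduce the global optimization to an independent analysis of each $K_4$-orbit. The crucial structural observation is that in any $\comp/\rev$ matching the partner of $h$ is either $\comp(h)$ or $\rev(h)$, both of which lie in the $K_4$-orbit of $h$; hence every orbit is matched internally, and because the total cost is additive over pairs (Definition~\ref{def:totalcost}) it splits as a sum of per-orbit costs that can be minimized independently. The first step is to pin down the two relevant distances. Since $\comp$ flips all six bits, $\Ham(h,\comp(h)) = 6$ for every $h$. Since $\rev$ swaps the symmetric positions $(0,5),(1,4),(2,3)$, a position contributes to $\Ham(h,\rev(h))$ exactly when its symmetric pair is mismatched, so $\Ham(h,\rev(h)) = 2k(h)$ where $k(h) \in \{0,1,2,3\}$ counts the mismatched symmetric pairs. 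This recovers Lemma~\ref{lem:noconflict}: for $k(h) < 3$ the reversal cost $2k(h)$ is strictly below $6$, and for $k(h) = 3$ the two costs merely coincide.

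Next I would classify the $64$ elements by $k(h)$ and read off the orbit structure. Counting by $\binom{3}{k}2^3$ gives $8, 24, 24, 8$ elements for $k = 0,1,2,3$. The palindromes are exactly the $k(h)=0$ elements (freely determined by the first three bits), where $\rev(h)=h$; the anti-palindromes are the $k(h)=3$ elements, where every symmetric pair is mismatched, equivalently $\comp(h)=\rev(h)$ and $\comp\circ\rev$ fixes $h$. For $k(h) \in \{0,3\}$ the orbit has size $2$, while for $k(h) \in \{1,2\}$ the four images $h,\comp(h),\rev(h),\comp\circ\rev(h)$ are pairwise distinct, so the orbit has size $4$.

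I would then solve each orbit type. A palindrome $p$ cannot be reversal-matched (its reversal is itself), so it is forced into the complement pair $\{p,\comp(p)\}$ of cost $6$, a pairing present in \emph{every} $\comp/\rev$ matching. For an anti-palindrome $h$ we have $\comp(h)=\rev(h)$, so the complement pair and the reversal pair are literally the same set $\{h,\comp(h)\}$ of cost $6$; there is no genuine choice and hence no ambiguity. This is the case I expect to require the most care: the cost tie at $k=3$ naively suggests two distinct optimal pairings and thus non-uniqueness, but the resolution is that the two candidate pairings are identical, so no freedom is introduced. On a size-$4$ orbit the only two internally consistent matchings are the pair of reversal pairs $\{h,\rev(h)\},\{\comp(h),\comp\circ\rev(h)\}$ of total cost $4k(h)$, or the pair of complement pairs of total cost $12$; since $4k(h) < 12$ for $k(h) \in \{1,2\}$, the all-reversal choice is the unique per-orbit minimizer.

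Finally, assembling the per-orbit optima reproduces exactly the reverse-priority rule---reversal everywhere except on palindromes, where the forced complement applies---and since each orbit's optimum is unique, the global minimizer is unique. Summing per-orbit costs over the four $k=0$ orbits, four $k=3$ orbits, six $k=1$ orbits, and six $k=2$ orbits gives $4\cdot 6 + 4\cdot 6 + 6\cdot 4 + 6\cdot 8 = 120$, versus $32\cdot 6 = 192$ for the complement-only matching, which completes the argument.
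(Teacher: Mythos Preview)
Your proof is correct and follows essentially the same strategy as the paper, hinging on the same No-Conflict observation (your $2k(h) < 6$ for $k < 3$, with $k=3$ forcing $\rev(h)=\comp(h)$) and the forced complement on palindromes. Your explicit per-orbit enumeration via the parameter $k(h)$ is a slightly more systematic packaging than the paper's pointwise greedy argument together with a separate involution check (Theorems~\ref{thm:involution} and~\ref{thm:optimal}), but the content is the same.
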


\begin{theorem}[Isomorphism with King Wen]\label{thm:iching}
The King Wen sequence of the I Ching defines a perfect matching on $\B^6$ that is isomorphic to the unique reverse-priority matching of Theorem~\ref{thm:main}.
\end{theorem}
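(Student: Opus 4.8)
The plan is to produce the standard bijection $\phi$ from the $64$ hexagrams to $\B^6$ and to show that it carries the King Wen matching $M_{\mathrm{KW}}$ onto the reverse-priority matching, which Theorem~\ref{thm:main} identifies as the unique optimum. Concretely, encode a hexagram by reading its six lines from bottom to top, recording a solid (yang) line as $1$ and a broken (yin) line as $0$; this produces a string $b_0 b_1 \cdots b_5 \in \B^6$, and $\phi$ is a bijection because the reading is injective between two $64$-element sets. The two classical relations on hexagrams are geometric inversion $\mathrm{inv}$ (turning the figure upside down, i.e.\ reversing the line order) and yin--yang exchange $\mathrm{swap}$ (interchanging solid and broken lines). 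First I would check that under $\phi$ these become exactly $\rev$ and $\comp$:
\[
\phi \circ \mathrm{inv} = \rev \circ \phi, \qquad \phi \circ \mathrm{swap} = \comp \circ \phi,
\]
which is immediate, since reversing the line order reverses the bit string and exchanging line types flips every bit.

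With this dictionary in hand, the theorem reduces to one structural fact about the traditional order: consecutive King Wen hexagrams are related by inversion, except that the self-inverse hexagrams are instead paired by yin--yang exchange. A hexagram is self-inverse exactly when $\phi$ sends it to a palindrome fixed by $\rev$ (so that $b_0 b_1 \cdots b_5 = b_5 \cdots b_1 b_0$); there are $2^3 = 8$ such hexagrams, and since the complement of a palindrome is again a palindrome, they form $4$ complement pairs, leaving $28$ inversion pairs, for the expected total of $32$. Transporting the King Wen rule through $\phi$ thus yields, verbatim, the reverse-priority rule: pair $h$ with $\rev(h)$ unless $h = \rev(h)$, in which case pair $h$ with $\comp(h)$. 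By the uniqueness assertion of Theorem~\ref{thm:main}, $\phi(M_{\mathrm{KW}})$ then equals the optimal $\comp/\rev$ matching, so $\phi$ is the desired isomorphism.

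The hard part is establishing the structural fact itself, because the King Wen order is handed down historically rather than generated by a formula. I would discharge it by the finite enumeration already central to the Lean development: record the authentic King Wen numbering as an explicit length-$64$ list of elements of $\B^6$ under the fixed convention, and verify mechanically that each consecutive pair is an inversion pair, with complement pairs occurring on precisely the $8$ palindromes. Note that a matching is only a set of unordered edges, so the (famously unexplained) order in which King Wen arranges its pairs is irrelevant here; all that matters is the within-pair relation, which is uniformly inversion-or-complement. The mathematical content is therefore light, but correctness hinges entirely on using the genuine sequence together with a single consistent encoding, and the Lean check settles both points beyond dispute.

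Finally, I would remark that ``isomorphic'' is the appropriate word precisely because the conclusion does not depend on the encoding. Any other admissible convention---reading top-to-bottom, or taking yin $=1$---replaces $\phi$ by $g \circ \phi$ for some $g \in K_4$ and hence conjugates the target matching by $g$. But Theorem~\ref{thm:main} already records that the optimal matching is $K_4$-equivariant, i.e.\ invariant under conjugation by every element of $K_4$; so every admissible encoding carries $M_{\mathrm{KW}}$ to the same matching, and the isomorphism class in Theorem~\ref{thm:iching} is well defined.
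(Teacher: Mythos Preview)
Your proposal is correct and matches the paper's approach: both reduce the theorem to a finite enumeration (the paper's Theorem~\ref{thm:equivariant}, discharged by \texttt{decide} in Lean) showing that every King Wen pair obeys the reverse-priority rule, then invoke the uniqueness half of Theorem~\ref{thm:main}. Your additional remark on encoding-independence via $K_4$-conjugation is a nice observation the paper does not make explicit, but the core argument is the same.
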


\begin{theorem}[Weight-Conserving Stability]\label{thm:weightedintro}
Let $w(h)$ be the Hamming weight of $h$, and define
\[
E_{\alpha,\beta}(h,h')=\alpha |w(h)-w(h')|+\beta \Ham(h,h')
\]
with $\alpha,\beta>0$. On every orbit where $\comp\circ\rev$ improves pure Hamming distance over $\rev$, the reverse-priority choice has smaller weighted energy whenever $\alpha>\beta$. At $\alpha=\beta$ the two choices tie.
\end{theorem}

\subsection{Formal Verification}

The finite orbit counts and case distinctions used below were checked in Lean~4 with Mathlib. The formalization is not needed to follow the argument, but it records the computations over $\B^6$ exactly. Key modules:
\begin{itemize}
    \item \texttt{IChing/Hexagram.lean}: $\B^6$ representation, Hamming distance
    \item \texttt{IChing/Symmetry.lean}: $K_4$-action, orbit structure
    \item \texttt{IChing/KingWenOptimality.lean}: reverse-priority and uniqueness lemmas
    \item \texttt{IChing/WeightConservation.lean}: Hamming-weight preservation lemmas
    \item \texttt{IChing/RobustOptimality.lean}: weighted comparisons
\end{itemize}

\section{Preliminaries}

\subsection{Hexagrams as Binary Vectors}

In the I Ching, a \emph{hexagram} is a figure composed of six stacked horizontal lines, each either solid (representing yang) or broken (representing yin). Reading solid as $1$ and broken as $0$, from bottom to top, each hexagram corresponds to a 6-bit binary string. There are $2^6 = 64$ hexagrams, corresponding to the vertices of the 6-dimensional hypercube.

\begin{definition}[Hexagram]\label{def:hexagram}
A \emph{hexagram} is an element $h \in \B^6$, with positions indexed $0$ (bottom) through $5$ (top).
\end{definition}

\begin{remark}
The correspondence between hexagrams and binary numbers was noted by Leibniz~\cite{leibniz1703}.
\end{remark}

\begin{definition}[Hamming Distance]
For $h_1, h_2 \in \B^6$, the \emph{Hamming distance} is
\[
\Ham(h_1, h_2) = \#\{i : h_1(i) \neq h_2(i)\} = \sum_{i=0}^{5} |h_1(i) - h_2(i)|
\]
\end{definition}

\begin{definition}[Total Hamming Cost of a Matching]\label{def:totalcost}
For a perfect matching $M$ on $\B^6$ (a partition into 32 disjoint pairs), the \emph{total Hamming cost} is the sum of Hamming distances over all pairs:
\[
\mathrm{Cost}(M) = \sum_{\{h_1, h_2\} \in M} \Ham(h_1, h_2)
\]
This measures how ``different'' the paired elements are overall.
\end{definition}

\subsection{The Klein Four-Group Action}

\begin{definition}[Complement and Reversal]
Define operations on hexagrams:
\begin{align*}
\comp(h)(i) &= 1 - h(i) \quad \text{(bitwise complement)} \\
\rev(h)(i) &= h(5-i) \quad \text{(bit reversal)}
\end{align*}
\end{definition}

\begin{proposition}\label{prop:klein}
The operations $\comp$ and $\rev$ satisfy:
\begin{enumerate}
    \item $\comp \circ \comp = \mathrm{id}$ (complement is an involution)
    \item $\rev \circ \rev = \mathrm{id}$ (reversal is an involution)
    \item $\comp \circ \rev = \rev \circ \comp$ (they commute)
\end{enumerate}
Thus $\{\mathrm{id}, \comp, \rev, \comp \circ \rev\}$ forms the Klein four-group $\Z_2 \times \Z_2$.
\end{proposition}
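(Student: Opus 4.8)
The plan is to verify each of the three identities by a pointwise computation on coordinates $i \in \{0, 1, \ldots, 5\}$, and then assemble these into the group isomorphism. Since all four operations are functions $\B^6 \to \B^6$, two such functions are equal exactly when they agree on every $h$ at every index $i$, so throughout it suffices to chase the definitions coordinatewise.

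First I would establish that $\comp$ is an involution: for arbitrary $h$ and $i$, unfolding the definition twice gives $(\comp \circ \comp)(h)(i) = 1 - (1 - h(i)) = h(i)$, where the cancellation is just arithmetic in $\Z$. Similarly, for $\rev$ I would compute $(\rev \circ \rev)(h)(i) = \rev(h)(5-i) = h(5-(5-i)) = h(i)$, the only point to check being that $5 - (5 - i) = i$ for each admissible index. For commutativity I would expand both sides at index $i$: on one hand $(\comp \circ \rev)(h)(i) = 1 - h(5-i)$, and on the other $(\rev \circ \comp)(h)(i) = \comp(h)(5-i) = 1 - h(5-i)$, so the two coincide by inspection.

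From the three identities the group structure follows formally. The composition $\comp \circ \rev$ is itself an involution, since $(\comp \circ \rev) \circ (\comp \circ \rev) = \comp \circ (\rev \circ \comp) \circ \rev = \comp \circ (\comp \circ \rev) \circ \rev = (\comp \circ \comp) \circ (\rev \circ \rev) = \mathrm{id}$, where the middle step uses commutativity. Thus every non-identity element has order two and the four listed maps are closed under composition, so they form an abelian group of order at most four. To conclude it is genuinely $\Z_2 \times \Z_2$ rather than a smaller quotient, I would exhibit explicit witnesses showing the four elements are pairwise distinct: $\comp(000000) = 111111 \neq 000000$ gives $\comp \neq \mathrm{id}$; $\rev(100000) = 000001 \neq 100000$ gives $\rev \neq \mathrm{id}$; and evaluating at $000000$ separates $\comp$ from $\rev$ and $\comp \circ \rev$ from $\mathrm{id}$. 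The assignment $\mathrm{id}, \comp, \rev, \comp \circ \rev \mapsto (0,0), (1,0), (0,1), (1,1)$ is then a bijective homomorphism onto $\Z_2 \times \Z_2$.

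The computations themselves are entirely routine; the only place requiring genuine care is the distinctness check, because the conclusion $\Z_2 \times \Z_2$ is \emph{false} without it — were $\comp$ to coincide with $\rev$, the group would collapse to $\Z_2$. I therefore regard exhibiting the separating witnesses, rather than the algebraic cancellations, as the substantive part of the argument.
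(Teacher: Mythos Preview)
Your argument is correct. The paper states this proposition without proof, treating the three pointwise identities and the resulting group structure as routine; you supply all of the details, including the explicit separating witnesses needed to ensure the group has order exactly four rather than collapsing to $\Z_2$ or the trivial group --- a point the paper leaves entirely implicit.
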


\begin{definition}[Orbit]
The \emph{orbit} of a hexagram $h$ under the Klein four-group is
\[
\mathrm{orbit}(h) = \{h, \comp(h), \rev(h), \comp(\rev(h))\}
\]
\end{definition}

\begin{proposition}[Orbit Sizes]
For any hexagram $h$, $|\mathrm{orbit}(h)| \in \{2, 4\}$ (see Figure~\ref{fig:orbits}).
\begin{itemize}
    \item Size 4: generic hexagrams (48 total, forming 12 orbits)
    \item Size 2: palindromes with $h = \rev(h)$ (8 total, forming 4 orbits)
    \item Size 2: anti-symmetric hexagrams with $\rev(h) = \comp(h)$ but $h \neq \rev(h)$ (8 total, forming 4 orbits)
\end{itemize}
\end{proposition}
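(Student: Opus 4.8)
The plan is to invoke the orbit--stabilizer theorem and then pin down exactly which stabilizer subgroups can occur. Since $|K_4| = 4$, every orbit size divides $4$ and hence lies in $\{1,2,4\}$; it remains to exclude size $1$ and to match the size-$2$ case to the two geometric conditions in the statement. First I would observe that $\comp$ is \emph{fixed-point free}: $\comp(h) = h$ would force $1 - h(i) = h(i)$ for every coordinate, which is impossible over $\B$. Consequently no stabilizer $\mathrm{Stab}(h) \le K_4$ can contain $\comp$, which rules out both the full group $K_4$ and the order-two subgroup $\{\mathrm{id}, \comp\}$. In particular the orbit can never have size $1$.

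The remaining possibilities for $\mathrm{Stab}(h)$ are the trivial group (orbit size $4$) and the two order-two subgroups not containing $\comp$, namely $\{\mathrm{id}, \rev\}$ and $\{\mathrm{id}, \comp\circ\rev\}$. The first corresponds to $\rev(h) = h$, i.e.\ $h$ is a palindrome; the second to $(\comp\circ\rev)(h) = h$, which (applying the involution $\comp$ to both sides) is equivalent to $\rev(h) = \comp(h)$, the anti-symmetric condition. I would note that these two conditions are mutually exclusive: if $h$ satisfied both then $h = \rev(h) = \comp(h)$, again contradicting that $\comp$ has no fixed point. This also shows the side condition $h \neq \rev(h)$ in the anti-symmetric case is automatic, since $\rev(h) = \comp(h) = h$ is impossible.

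It then remains to count. For palindromes the defining equations $h(i) = h(5-i)$ pair coordinate $0$ with $5$, $1$ with $4$, and $2$ with $3$, leaving three free bits and hence $2^3 = 8$ solutions; as each such orbit has size $2$ this gives $8/2 = 4$ orbits. For anti-symmetric hexagrams the equations $h(5-i) = 1 - h(i)$ likewise determine the top three bits from the bottom three, again giving $2^3 = 8$ elements and $4$ orbits. Subtracting these $16$ size-$2$ elements from the $64$ total leaves $48$ generic hexagrams, all with orbit size $4$, forming $48/4 = 12$ orbits, matching the claimed tally.

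I expect no genuine obstacle here; the only points requiring care are the two small ``automatic'' facts that flow from $\comp$ being fixed-point free---namely that orbit size $1$ is impossible and that the palindrome and anti-symmetric classes are disjoint---so I would state these explicitly rather than leave them implicit.
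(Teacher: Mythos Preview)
Your argument is correct and complete. The orbit--stabilizer framing is clean: the single observation that $\comp$ is fixed-point free does all the structural work, excluding orbit size $1$, forcing the two size-$2$ cases to be exactly $\{\mathrm{id},\rev\}$ and $\{\mathrm{id},\comp\circ\rev\}$, and guaranteeing their disjointness. The counting via three free bits in each case is straightforward and accurate.

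By way of comparison, the paper does not actually supply a proof of this proposition: it is stated and illustrated by Figure~\ref{fig:orbits}, with the implicit backing of the Lean~4 formalization (where such a finite claim is dispatched by \texttt{decide} or direct enumeration). Your approach is therefore not so much \emph{different} from the paper's as it is a genuine proof where the paper offers none. The orbit--stabilizer route you take is the natural conceptual argument and has the advantage of generalizing immediately to $\B^n$ for arbitrary even $n$ (and, with the obvious adjustment for the middle bit, odd $n$), whereas a computational check is specific to $n=6$.
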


\begin{figure}[ht]
\centering
\begin{tikzpicture}[
    vertex/.style={circle, draw, minimum size=10mm, inner sep=0pt, font=\footnotesize},
    vertexwide/.style={circle, draw, minimum size=14mm, inner sep=0pt, font=\footnotesize},
    edge label/.style={font=\scriptsize, fill=white, inner sep=2pt}
]
% Size-4 orbit (generic)
\node[vertex] (h) at (0,1.8) {$h$};
\node[vertex] (c) at (-1.8,0) {$\comp(h)$};
\node[vertex] (r) at (1.8,0) {$\rev(h)$};
\node[vertexwide] (cr) at (0,-1.8) {$\comp\!\circ\!\rev$};

\draw[thick] (h) -- node[edge label, above left=-2pt] {$6$} (c);
\draw[thick, blue!60!black] (h) -- node[edge label, above right=-2pt] {$2$--$4$} (r);
\draw[thick] (c) -- node[edge label, below left=-2pt] {$2$--$4$} (cr);
\draw[thick] (r) -- node[edge label, below right=-2pt] {$6$} (cr);

\node at (0,-3.5) {\small (a) Size-4 orbit};

% Size-2 orbit (palindrome)
\begin{scope}[xshift=5.5cm]
\node[vertex] (p1) at (0,1.8) {$h$};
\node[vertex] (p2) at (0,-1.8) {$\comp(h)$};
\draw[thick] (p1) -- node[edge label, right] {$6$} (p2);
\node[align=center] at (0,-3.5) {\small (b) Palindrome\\\small $\rev(h) = h$};
\end{scope}

% Size-2 orbit (anti-symmetric)
\begin{scope}[xshift=9cm]
\node[vertex] (a1) at (0,1.8) {$h$};
\node[vertex] (a2) at (0,-1.8) {$\rev(h)$};
\draw[thick] (a1) -- node[edge label, right] {$6$} (a2);
\node[align=center] at (0,-3.5) {\small (c) Anti-symmetric\\\small $\rev(h) = \comp(h)$};
\end{scope}
\end{tikzpicture}
\caption{Orbit types under $K_4$. Edge labels show Hamming distances. In (a), the $\rev$ edge (blue) has cost $2$--$4$, always $\leq$ the $\comp$ edge (cost $6$). In (b) and (c), only one non-trivial pairing exists.}
\label{fig:orbits}
\end{figure}

\begin{definition}[Anti-Symmetric Hexagram]
A hexagram $h$ is \emph{anti-symmetric} if $h(i) \neq h(5-i)$ for all $i \in \{0,1,2\}$. Equivalently, $\rev(h) = \comp(h)$.
\end{definition}

\begin{definition}[Palindrome]
A hexagram $h$ is a \emph{palindrome} if $\rev(h) = h$.
\end{definition}

\subsection{Hamming Distances of Group Actions}

\begin{proposition}\label{prop:distances}
For any hexagram $h$:
\begin{enumerate}
    \item $\Ham(h, \comp(h)) = 6$ (complement flips all bits)
    \item $\Ham(h, \rev(h)) \leq 6$, with equality iff $h(i) \neq h(5-i)$ for all $i$
\end{enumerate}
\end{proposition}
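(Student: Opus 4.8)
The plan is to unfold both Hamming distances directly from the definition and reduce each claim to a pointwise statement about the six coordinates; throughout I write $\Ham(h_1,h_2) = \#\{i \in \{0,\dots,5\} : h_1(i) \neq h_2(i)\}$. For part (1), I would substitute $\comp(h)(i) = 1 - h(i)$ to obtain $\Ham(h,\comp(h)) = \#\{i : h(i) \neq 1 - h(i)\}$. Since each $h(i) \in \{0,1\}$, the value $1 - h(i)$ is always the other element of $\{0,1\}$, so $h(i) \neq 1 - h(i)$ holds at every coordinate (the only checks are $0 \neq 1$ and $1 \neq 0$). Hence the differing set is all of $\{0,\dots,5\}$ and the count is $6$.

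For part (2), I would substitute $\rev(h)(i) = h(5-i)$ to get $\Ham(h,\rev(h)) = \#\{i : h(i) \neq h(5-i)\}$. The bound $\leq 6$ is immediate, since the index set $\{0,\dots,5\}$ has only six elements. For the equality characterization I would argue both directions at once: a count taken over a six-element index set equals $6$ precisely when every index lies in the differing set, i.e.\ when $h(i) \neq h(5-i)$ for all $i$. The forward implication uses that a subset of a six-element set having cardinality $6$ must be the whole set; the reverse implication is immediate.

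Neither part presents a genuine obstacle; the only care required is handling the ``iff'' of (2) cleanly rather than grinding through coordinates. It is worth recording one structural remark that, although not needed for the stated bound, clarifies the adjacent claims. The differing index set is invariant under the involution $i \mapsto 5-i$, because $h(i) \neq h(5-i)$ holds iff $h(5-i) \neq h(i)$. Consequently the three coordinate pairs $\{0,5\}$, $\{1,4\}$, $\{2,3\}$ each contribute $0$ or $2$ to the count, so $\Ham(h,\rev(h)) \in \{0,2,4,6\}$ is always even. This explains the ``$2$--$4$'' edge labels in Figure~\ref{fig:orbits}: a generic size-$4$ orbit excludes distance $0$ (which forces $h = \rev(h)$, a palindrome) and distance $6$ (which forces $\rev(h) = \comp(h)$, anti-symmetry), and it identifies the equality case of (2) as exactly the anti-symmetric hexagrams.
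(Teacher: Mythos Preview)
Your argument is correct. Part~(1) follows immediately from $\comp(h)(i) = 1 - h(i) \neq h(i)$ at every coordinate, and part~(2) reduces to the observation that a subset of a six-element set has size~$6$ iff it is the whole set. The additional remark on parity (that the differing set is closed under $i \mapsto 5-i$, forcing $\Ham(h,\rev(h)) \in \{0,2,4,6\}$) is also correct and is indeed what underlies the ``$2$--$4$'' labels in Figure~\ref{fig:orbits} and the anti-symmetric characterization.

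By way of comparison: the paper states Proposition~\ref{prop:distances} without proof, treating it as an immediate consequence of the definitions. Your write-up supplies exactly the unfolding the paper leaves implicit, and your parity observation goes slightly beyond what the proposition asserts, anticipating the content of Lemma~\ref{lem:noconflict}. There is nothing to correct.
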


\begin{lemma}[No-Conflict Lemma]\label{lem:noconflict}
For any hexagram $h$, exactly one of the following holds:
\begin{enumerate}
    \item $\Ham(h, \rev(h)) < \Ham(h, \comp(h)) = 6$ (reversal is strictly cheaper), or
    \item $\rev(h) = \comp(h)$ and both distances equal $6$ (the options coincide), or
    \item $\rev(h) = h$ (palindrome, so $\rev$ yields trivial pairing).
\end{enumerate}
In particular, $\rev$ and $\comp$ never genuinely compete: when both are valid and distinct, $\rev$ is strictly cheaper.
\end{lemma}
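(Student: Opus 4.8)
The plan is to reduce everything to counting how many of the three ``mirror pairs'' of positions disagree. Reversal pairs position $i$ with position $5-i$, partitioning the six coordinates into the pairs $(0,5)$, $(1,4)$, $(2,3)$. Since $\rev(h)(i) = h(5-i)$, the two positions of each mirror pair contribute symmetrically to $\Ham(h,\rev(h)) = \sum_{i=0}^{5} |h(i) - h(5-i)|$: a mirror pair adds $0$ when its two bits agree and $2$ when they disagree. Hence, letting $k \in \{0,1,2,3\}$ denote the number of disagreeing mirror pairs, I would first establish the clean formula
\[
\Ham(h, \rev(h)) = 2k .
\]

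With this formula in hand the trichotomy falls out by inspecting the single well-defined value of $k$. When $k = 0$ every mirror pair agrees, so $h(i) = h(5-i)$ for all $i$; this is exactly $\rev(h) = h$, the palindrome case (condition 3), with $\Ham(h,\rev(h)) = 0$. When $k = 3$ every mirror pair disagrees, so $h(i) \neq h(5-i)$ for all $i$; by the anti-symmetric characterization this is exactly $\rev(h) = \comp(h)$, and $\Ham(h,\rev(h)) = 6 = \Ham(h,\comp(h))$, giving condition 2. The remaining regime $k \in \{1,2\}$ yields $\Ham(h,\rev(h)) \in \{2,4\}$, strictly below the constant value $\Ham(h,\comp(h)) = 6$ supplied by Proposition~\ref{prop:distances}(1); this is condition 1, with $\rev$ genuinely cheaper and producing a distinct, non-trivial pairing.

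Because $k$ is a single determined integer, exactly one of the values $0,1,2,3$ occurs, and the grouping $\{0\}$, $\{1,2\}$, $\{3\}$ exhibits the three cases as mutually exclusive and exhaustive. The one point deserving care is the boundary between conditions 1 and 3: the palindrome value $k=0$ also satisfies the bare inequality $\Ham(h,\rev(h)) < 6$, so condition 1 must be read as the genuinely competing regime $0 < \Ham(h,\rev(h)) < 6$ (equivalently $h \neq \rev(h)$ and $\rev(h) \neq \comp(h)$); under that reading the three conditions partition $\B^6$ cleanly. I expect no real obstacle beyond this bookkeeping: the heart of the argument is the position-pairing observation yielding $\Ham(h,\rev(h)) = 2k$, after which the classification is immediate, and the closing ``$\rev$ never genuinely competes'' claim is simply that whenever $h$ is neither a palindrome nor anti-symmetric we have $2k \leq 4 < 6$.
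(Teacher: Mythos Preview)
Your argument is correct and follows essentially the same route as the paper: both hinge on the mirror-pair structure, with the paper arguing directly that $\Ham(h,\rev(h))=6$ forces every position to differ (hence $\rev(h)=\comp(h)$), while you package the same observation via the explicit count $k$ of disagreeing mirror pairs and the formula $\Ham(h,\rev(h))=2k$. Your flag that condition~1, read literally, overlaps with the palindrome case (distance $0<6$) is a valid caveat the paper's proof glosses over; your proposed reading of condition~1 as the genuine-competition regime $h\neq\rev(h)$ is the intended one.
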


\begin{proof}
Since $\comp$ flips all bits, $\Ham(h, \comp(h)) = 6$ always. For $\rev$: if $\Ham(h, \rev(h)) = 6$, then every position differs, so $h(5-i) \neq h(i)$ for all $i$. This means $h(5-i) = 1 - h(i) = \comp(h)(i)$, hence $\rev(h) = \comp(h)$. If $\rev(h) = h$, then $h$ is a palindrome. Otherwise $\Ham(h, \rev(h)) < 6$.
\end{proof}

\begin{remark}[Why Greedy Works]
In typical equivariant matching problems, choosing the locally optimal partner for $h$ may force suboptimal choices elsewhere. The No-Conflict Lemma eliminates this tension: the greedy choice (prefer $\rev$ unless it's trivial) is \emph{unambiguous}. There is no trade-off to optimize, hence no room for conflict (see Figure~\ref{fig:consistency}). This is why the simple reverse-priority rule achieves the unique global optimum.
\end{remark}

\begin{figure}[ht]
\centering
\begin{tikzpicture}[
    box/.style={rectangle, draw, rounded corners, minimum width=2cm, minimum height=0.9cm, font=\normalsize},
    arrow/.style={-{Stealth}, thick, blue!60!black}
]
% Nodes
\node[box] (h) at (0,0) {$h$};
\node[box] (revh) at (5,0) {$\rev(h)$};

% Forward arrow
\draw[arrow] (h.east) -- node[above, font=\small] {choose $\rev$} (revh.west);

% Curved return arrow - label positioned on curve, not at bottom
\draw[arrow] (revh.south) to[out=-150, in=-30] node[pos=0.5, below, font=\small, fill=white, inner sep=2pt] {forced: $\rev \circ \rev = \mathrm{id}$} (h.south);

% Explanation below - clear of curve
\node[font=\small] at (2.5, -2.0) {The choice is globally consistent.};
\node[font=\small] at (2.5, -2.6) {Pairing $h \leftrightarrow \rev(h)$ determines both partners.};
\end{tikzpicture}
\caption{Forced consistency of the reverse-priority rule. Choosing $\rev(h)$ as partner for $h$ simultaneously determines $h$ as partner for $\rev(h)$, since $\rev$ is an involution. No secondary decision can conflict.}
\label{fig:consistency}
\end{figure}

\section{The King Wen Sequence}

\subsection{Definition}

The King Wen sequence assigns each hexagram a number 1--64 according to tradition~\cite{wilhelm1967}. We represent it via a mapping from King Wen numbers to binary:

\begin{definition}[King Wen Binary Mapping]
Define $\mathrm{KW} : \{0, \ldots, 63\} \to \B^6$ as follows (index 0 corresponds to King Wen number 1):
\begin{center}
\small
\begin{tabular}{r|cccccccc}
KW & 1 & 2 & 3 & 4 & 5 & 6 & 7 & 8 \\
Binary & 63 & 0 & 17 & 34 & 23 & 58 & 2 & 16 \\
\midrule
KW & 9 & 10 & 11 & 12 & 13 & 14 & 15 & 16 \\
Binary & 55 & 59 & 7 & 56 & 61 & 47 & 4 & 8 \\
\end{tabular}
\end{center}
(Full table in Appendix.)
\end{definition}

\begin{definition}[King Wen Pairs]
The \emph{King Wen pairs} are $(h_{2k}, h_{2k+1})$ for $k = 0, \ldots, 31$, where $h_i = \mathrm{KW}(i)$.
\end{definition}

\subsection{The Equivariance Theorem}

\begin{theorem}[Complete Equivariance]\label{thm:equivariant}
Every King Wen pair $(h_1, h_2)$ satisfies:
\[
h_2 = \comp(h_1) \quad \text{or} \quad h_2 = \rev(h_1)
\]
The 32 pairs partition as:
\begin{itemize}
    \item 4 pairs: palindromes, paired by complement ($h_2 = \comp(h_1)$, distance 6)
    \item 4 pairs: anti-symmetric, paired by reversal $=$ complement ($h_2 = \rev(h_1) = \comp(h_1)$, distance 6)
    \item 24 pairs: generic, paired by reversal ($h_2 = \rev(h_1)$, distance 2 or 4)
\end{itemize}
\end{theorem}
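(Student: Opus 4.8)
The plan is to treat this as a finite verification over the $32$ explicitly listed King Wen pairs, reducing each of the two candidate relations $h_2 = \comp(h_1)$ and $h_2 = \rev(h_1)$ to checks that are cheap to carry out on integers in $\{0,\dots,63\}$. Writing each hexagram $h$ as the integer $\sum_{i=0}^{5} h(i)\,2^i$, the complement relation becomes purely arithmetic: since $\comp(h)(i) = 1 - h(i)$, we have $\comp(h) = 63 - h$, so $h_2 = \comp(h_1)$ holds exactly when $h_1 + h_2 = 63$. The reversal relation $h_2 = \rev(h_1)$ is the fixed bit-permutation sending position $i$ to position $5-i$; for each pair I would compute $\rev(h_1)$ from the 6-bit expansion and compare against $h_2$. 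For instance, from the table, $(63,0)$ satisfies $63+0=63$ (complement), while $(17,34)$ has $010001 \mapsto 100010$, so $\rev(17)=34$ (reversal), confirming the pattern the statement asserts.

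First I would run through all $32$ pairs $(h_{2k}, h_{2k+1})$ and, for each, test whether $h_1 + h_2 = 63$ (complement) or $\rev(h_1) = h_2$ (reversal); the first claim of the theorem is that at least one of these holds in every case. Second, I would classify each pair using the definitions and Proposition~\ref{prop:distances}: $h_1$ is a palindrome precisely when $\rev(h_1) = h_1$, and anti-symmetric precisely when $\rev(h_1) = \comp(h_1)$, i.e.\ when $h_1 + \rev(h_1) = 63$. The three stated buckets then fall out. The $4$ palindrome pairs are those where $\rev$ is trivial and complement supplies the partner (distance $6$); the $4$ anti-symmetric pairs are those where $\rev(h_1) = \comp(h_1)$ so the two relations coincide (distance $6$); and the remaining $24$ are generic pairs joined by $\rev$ with distance $2$ or $4$ by Proposition~\ref{prop:distances}. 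A final bookkeeping step confirms $4 + 4 + 24 = 32$ and that all distances match the claimed values.

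The main obstacle is not conceptual but one of correctness and volume: the statement is a conjunction of $32$ independent verifications, so the proof lives or dies on encoding the bit conventions exactly right---in particular the orientation of the index $i$ (position $0$ at the bottom, per Definition~\ref{def:hexagram}) and its matching integer encoding, so that the definition of $\rev$ on $\B^6$ as $h \mapsto h(5-i)$ lines up with the decoding of the table entries. The cleanest way to discharge this is by \texttt{decide} over the finite type $\B^6$: both the disjunction ``$h_2 = \comp(h_1)$ or $h_2 = \rev(h_1)$'' and each classification predicate are decidable, so once the King Wen table is encoded, the whole theorem reduces to a terminating finite computation, exactly as carried out in \texttt{IChing/KingWenOptimality.lean}. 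This also sidesteps any risk of arithmetic slips in the $24$ reversal checks, which are the most error-prone to perform by hand.
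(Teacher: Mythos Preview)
Your proposal is correct and matches the paper's approach exactly: the paper's entire proof is the one-liner ``Verified computationally over all 32 pairs. In Lean~4: \texttt{decide}.'' You have spelled out in more detail the integer encoding and the per-pair checks that \texttt{decide} performs under the hood, but the strategy---finite exhaustive verification over the 32 King Wen pairs, discharged by decidability---is identical.
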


\begin{proof}
Verified computationally over all 32 pairs. In Lean 4: \texttt{decide}.
\end{proof}

\begin{corollary}
The King Wen pairing respects the Klein four-group: paired hexagrams always lie in the same orbit.
\end{corollary}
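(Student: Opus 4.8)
The plan is to derive the corollary immediately from Theorem~\ref{thm:equivariant}, supplying only the standard fact that orbit-membership coincides with the equivalence relation generated by the group action. Theorem~\ref{thm:equivariant} already shows that every King Wen pair $(h_1, h_2)$ satisfies $h_2 = \comp(h_1)$ or $h_2 = \rev(h_1)$, so the remaining work is purely structural: I only need to convert the statement ``$h_2$ is a group image of $h_1$'' into ``$h_1$ and $h_2$ share an orbit.''

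First I would record that, since $K_4$ is a group (Proposition~\ref{prop:klein}), the relation ``$h \sim h'$ whenever $h' = g(h)$ for some $g \in K_4$'' is an equivalence relation on $\B^6$: reflexivity follows from $\mathrm{id} \in K_4$, symmetry from each element of $K_4$ being its own inverse, and transitivity from closure under composition. Its equivalence classes are precisely the orbits $\mathrm{orbit}(h) = \{h, \comp(h), \rev(h), \comp(\rev(h))\}$.

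Then, for an arbitrary King Wen pair $(h_1, h_2)$, I would apply Theorem~\ref{thm:equivariant} to write $h_2 = g(h_1)$ with $g \in \{\comp, \rev\} \subseteq K_4$. This gives $h_2 \in \mathrm{orbit}(h_1)$, hence $h_1 \sim h_2$, so the two hexagrams lie in a common orbit. Since this argument is uniform in the pair, it simultaneously covers all $32$ pairs, yielding the corollary.

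I expect no genuine obstacle: the combinatorial content was discharged by Theorem~\ref{thm:equivariant}, and what is left is the elementary observation that $\comp$ and $\rev$ belong to $K_4$. The one point deserving a word of care is symmetry of the relation---that $h_2 \in \mathrm{orbit}(h_1)$ forces $h_1 \in \mathrm{orbit}(h_2)$ as well---but this is immediate because $\comp$ and $\rev$ are involutions (Proposition~\ref{prop:klein}), so each is its own inverse in $K_4$.
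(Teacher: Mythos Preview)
Your proposal is correct and takes essentially the same approach as the paper: the corollary is stated without proof, as an immediate consequence of Theorem~\ref{thm:equivariant}, and your argument supplies exactly the obvious deduction---$h_2 = \comp(h_1)$ or $h_2 = \rev(h_1)$ implies $h_2 \in \mathrm{orbit}(h_1)$ since $\comp, \rev \in K_4$. Your discussion of the equivalence relation and involution symmetry is more detail than the paper bothers with, but it is all correct and harmless.
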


\section{Optimality of the King Wen Matching}

\subsection{The Reverse-Priority Rule}

\begin{definition}[Reverse-Priority Matching]
Define the \emph{priority partner} function:
\[
\mathrm{partner}(h) =
\begin{cases}
\comp(h) & \text{if } h = \rev(h) \text{ (palindrome)} \\
\rev(h) & \text{otherwise}
\end{cases}
\]
\end{definition}

The intuition: prefer reversal (which has distance $\leq 6$) unless reversal is trivial (palindrome), in which case use complement.

\begin{theorem}[Involution]\label{thm:involution}
The priority partner function is an involution: $\mathrm{partner}(\mathrm{partner}(h)) = h$.
\end{theorem}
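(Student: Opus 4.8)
The plan is to prove $\mathrm{partner}(\mathrm{partner}(h)) = h$ by case analysis on whether $h$ is a palindrome, and then further on the structure of $\mathrm{partner}(h)$ in the non-palindrome case. The two involution facts from Proposition~\ref{prop:klein}, namely $\comp \circ \comp = \mathrm{id}$ and $\rev \circ \rev = \mathrm{id}$, together with commutativity, will do all the heavy lifting. First I would split on the defining condition. If $h$ is a palindrome ($h = \rev(h)$), then $\mathrm{partner}(h) = \comp(h)$, and I must show $\mathrm{partner}(\comp(h)) = h$. The key sub-claim is that $\comp(h)$ is itself a palindrome whenever $h$ is: since $\rev(\comp(h)) = \comp(\rev(h)) = \comp(h)$ by commutativity and the palindrome hypothesis, $\comp(h)$ satisfies the palindrome condition, so $\mathrm{partner}(\comp(h)) = \comp(\comp(h)) = h$ by the complement involution. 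This closes the palindrome branch.

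In the non-palindrome case I would set $g = \mathrm{partner}(h) = \rev(h)$ and show $\mathrm{partner}(g) = h$. Here the analogous sub-claim is that $g = \rev(h)$ is not a palindrome. Suppose for contradiction $\rev(g) = g$, i.e.\ $\rev(\rev(h)) = \rev(h)$; by the reversal involution the left side is $h$, forcing $h = \rev(h)$, contradicting the non-palindrome hypothesis. Hence $g$ is not a palindrome, so $\mathrm{partner}(g) = \rev(g) = \rev(\rev(h)) = h$, again by the reversal involution. This completes the second branch and the proof.

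The only genuinely delicate point — and the step I expect to be the main (though still mild) obstacle — is verifying that the \emph{palindrome status is preserved under the partner operation in each branch}, since the two-case definition of $\mathrm{partner}$ means I must confirm that applying $\mathrm{partner}$ a second time lands in the same case as the first. Both confirmations reduce to the involution identities of Proposition~\ref{prop:klein}: in the palindrome branch I use commutativity of $\comp$ and $\rev$ to transport the palindrome property across $\comp$, and in the generic branch I use that $\rev$ is injective (being an involution) to rule out $\rev(h)$ being a fixed point. I would present both as short lemmas-in-line rather than separate displayed statements, then assemble the two branches into the final equality. Everything else is immediate from the two involution equations, so no extended computation is needed.
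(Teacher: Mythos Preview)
Your proposal is correct and follows essentially the same two-case argument as the paper: show that $\comp$ preserves palindromes (via commutativity) and that $\rev$ preserves non-palindromes (via injectivity of the involution), then apply the respective involution identity in each branch. The only difference is that you spell out explicitly the ``palindrome status is preserved'' sub-claims that the paper states parenthetically.
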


\begin{proof}
Two cases:
\begin{enumerate}
    \item If $h$ is a palindrome, then $\comp(h)$ is also a palindrome, so
    \[
    \mathrm{partner}(\comp(h))=\comp(\comp(h))=h.
    \]
    \item If $h$ is not a palindrome, then $\rev(h)$ is also not a palindrome, so
    \[
    \mathrm{partner}(\rev(h))=\rev(\rev(h))=h.
    \]
\end{enumerate}
\end{proof}

\subsection{Cost Minimization}

\begin{definition}[Total Hamming Cost]
For a perfect matching $M$ on $\B^6$, the \emph{total Hamming cost} is
\[
\mathrm{Cost}(M) = \sum_{\{h_1, h_2\} \in M} \Ham(h_1, h_2)
\]
\end{definition}

\begin{theorem}[Optimality among $\comp/\rev$ Matchings]\label{thm:optimal}
Among $K_4$-equivariant perfect matchings on $\B^6$ where each pair uses $\comp$ or $\rev$, the reverse-priority matching uniquely minimizes total Hamming cost.
\end{theorem}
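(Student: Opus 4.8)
The plan is to exploit the fact that any $\comp/\rev$ matching decomposes along $K_4$-orbits, reducing the global optimization to an independent optimization on each orbit. First I would observe that if $\{h,h'\}$ is a pair with $h' \in \{\comp(h),\rev(h)\}$, then $h'$ lies in $\mathrm{orbit}(h)$; hence every pair is contained in a single orbit, so $M$ restricts to a perfect matching on each orbit. Consequently $\mathrm{Cost}(M)$ (Definition~\ref{def:totalcost}) splits as a sum of per-orbit costs, and it suffices to show that the reverse-priority rule uniquely minimizes the cost contributed by each orbit; summing the orbit-wise minima then yields both global uniqueness and the value $120$.

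Next I would dispose of the two-element orbits. On a palindrome orbit $\{h,\comp(h)\}$ the only nontrivial $\comp/\rev$ edge is the complement edge (reversal is trivial), and on an anti-symmetric orbit $\{h,\rev(h)\}$ the complement and reversal edges coincide; in either case the single admissible pairing has cost $6$ and is forced. These are exactly the choices made by the reverse-priority rule, so the $8$ size-two orbits contribute a fixed $48$ and admit no competing matching.

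The heart of the argument is the size-four generic orbits $\{h,\comp(h),\rev(h),\comp\rev(h)\}$. Here I would first determine the admissible-edge graph: the complement edges are $\{h,\comp(h)\}$ and $\{\rev(h),\comp\rev(h)\}$ (each of cost $6$ by Proposition~\ref{prop:distances}), while the reversal edges are $\{h,\rev(h)\}$ and $\{\comp(h),\comp\rev(h)\}$ (each of cost $d := \Ham(h,\rev(h))$, equal because complement preserves Hamming distance). The two ``diagonal'' pairings $\{h,\comp\rev(h)\}$ and $\{\comp(h),\rev(h)\}$ are \emph{not} admissible, since on a size-four orbit all four elements are distinct and $\comp$ fixes no vertex; thus the admissible edges form a single $4$-cycle with alternating weights $6,d,6,d$. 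A $4$-cycle has exactly two perfect matchings---the two complement edges (total $12$) and the two reversal edges (total $2d$)---so minimizing over the orbit amounts to comparing $12$ against $2d$. By the No-Conflict Lemma~\ref{lem:noconflict}, a generic orbit is neither palindromic nor anti-symmetric, so $d<6$ strictly, giving $2d<12$. Hence the reversal matching is the \emph{unique} minimizer on each generic orbit, again matching the reverse-priority rule.

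Finally I would assemble the pieces: since every orbit is minimized uniquely (strictly, for the generic orbits) by the reverse-priority choice and $\mathrm{Cost}$ is additive over orbits, the reverse-priority matching is the unique global minimizer among $\comp/\rev$ matchings, with total cost $48 + \sum_{\text{generic}} 2d = 120$. The main obstacle is the size-four analysis---specifically, verifying that no admissible matching escapes the $4$-cycle structure (ruling out the diagonals, and confirming no pair can cross orbits) so that the clean two-matching dichotomy holds; once the $4$-cycle is established, the strict inequality $2d<12$ from Lemma~\ref{lem:noconflict} makes uniqueness immediate. The equivariance hypothesis imposes no further restriction, since $K_4$ preserves each orbit and both $4$-cycle matchings are already $K_4$-invariant.
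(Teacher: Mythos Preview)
Your proposal is correct and more detailed than the paper's own proof, which argues pointwise rather than orbit-by-orbit: the paper simply notes that for each $h$ the options are $\comp(h)$ and $\rev(h)$, that $\Ham(h,\rev(h)) \le 6 = \Ham(h,\comp(h))$ with equality only when $\rev(h)=\comp(h)$ (Lemma~\ref{lem:noconflict}), and that palindromes have only $\comp$ available; hence the greedy choice is forced at every point and the matching is uniquely determined. Your route instead makes the global structure explicit: you decompose $\mathrm{Cost}(M)$ over $K_4$-orbits, identify the admissible-edge graph on each size-four orbit as a $4$-cycle, and enumerate its two perfect matchings directly before invoking $d<6$. The two arguments share the same core input (the No-Conflict Lemma) and reach the same conclusion; what your orbit decomposition buys is a cleaner account of \emph{uniqueness}---you actually list the competing matching on each generic orbit and beat it strictly---whereas the paper's pointwise phrasing leaves the ``no other matching can tie'' step somewhat implicit. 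Conversely, the paper's argument is shorter and avoids the $4$-cycle bookkeeping entirely.
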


\begin{proof}
Restricting to $\comp/\rev$ pairings, the options for each hexagram $h$ are $\comp(h)$ or $\rev(h)$.
\begin{itemize}
    \item By Proposition~\ref{prop:distances}, $\Ham(h, \rev(h)) \leq 6 = \Ham(h, \comp(h))$.
    \item Equality holds iff $\rev(h) = \comp(h)$ (Lemma~\ref{lem:noconflict}).
    \item For palindromes, $\rev(h) = h$, so the only non-trivial option is $\comp(h)$.
\end{itemize}
Thus the reverse-priority rule chooses the minimum-distance option at each step. Since there is no actual choice when distances are equal (the options coincide), the matching is uniquely determined.
\end{proof}

\begin{remark}[The $\comp \circ \rev$ Alternative]\label{rem:cr}
On a size-4 orbit, a third equivariant pairing exists: $\{h, \comp(\rev(h))\}$. For orbits where $\Ham(h, \rev(h)) = 4$, we have $\Ham(h, \comp(\rev(h))) = 2$, making the $\comp \circ \rev$ pairing strictly cheaper for pure Hamming distance (see Figure~\ref{fig:comprev}). A mixed strategy using $\comp \circ \rev$ on such orbits achieves total Hamming cost $96 < 120$. Thus King Wen is not the unconstrained Hamming optimum for the full $K_4$ action. The remaining question is what additional structure selects it.
\end{remark}

\begin{proposition}[Full $K_4$ Hamming Cost]\label{prop:k4hamming96}
Allowing all three nontrivial $K_4$ pairings gives a pure Hamming cost of $96$.
\end{proposition}

\begin{proof}
The 4 palindrome orbits are forced to use complement, contributing $4\cdot 6=24$. The 4 anti-symmetric orbits are also forced, contributing another $4\cdot 6=24$. Among the 12 size-4 generic orbits, 6 have reversal distance $2$ and 6 have reversal distance $4$. On the first type, the reversal pairing contributes two edges of cost $2$, hence $4$ per orbit. On the second type, the $\comp\circ\rev$ pairing contributes two edges of cost $2$, again $4$ per orbit. Thus the generic contribution is $12\cdot 4=48$, and the total is
\[
24+24+48=96.
\]
\end{proof}

\begin{figure}[ht]
\centering
\begin{tikzpicture}[
    vertex/.style={circle, draw, minimum size=10mm, inner sep=0pt, font=\footnotesize},
    elabel/.style={font=\scriptsize, fill=white, inner sep=2pt}
]
% Diamond orbit - more spread out
\node[vertex] (h) at (0,2) {$h$};
\node[vertex] (c) at (-2.2,0) {$\comp(h)$};
\node[vertex] (r) at (2.2,0) {$\rev(h)$};
\node[vertex] (cr) at (0,-2) {$\comp\!\circ\!\rev(h)$};

% Edges with costs - using line styles for accessibility
\draw[thick, densely dotted] (h) -- node[elabel, above left=-1pt] {$6$} (c);
\draw[thick] (h) -- node[elabel, above right=-1pt] {$4$} (r);
\draw[thick] (c) -- node[elabel, below left=-1pt] {$4$} (cr);
\draw[thick, densely dotted] (r) -- node[elabel, below right=-1pt] {$6$} (cr);

% Diagonal edges (comp∘rev) - dashed
\draw[thick, dashed, blue!60!black] (h) -- node[elabel, pos=0.25, left] {$2$} (cr);
\draw[thick, dashed, blue!60!black] (c) -- node[elabel, pos=0.75, above] {$2$} (r);

% Legend to the right - line styles as primary differentiator
\node[anchor=west] at (4, 1) {\tikz\draw[thick] (0,0) -- (0.8,0); \small\ $\rev$: cost $4$};
\node[anchor=west] at (4, 0) {\tikz\draw[thick, densely dotted] (0,0) -- (0.8,0); \small\ $\comp$: cost $6$};
\node[anchor=west] at (4, -1) {\tikz\draw[thick, dashed, blue!60!black] (0,0) -- (0.8,0); \small\ $\comp\!\circ\!\rev$: cost $2$};
\end{tikzpicture}
\caption{A size-4 orbit where $\comp \circ \rev$ is Hamming-optimal. The diagonal pairing (dashed) has cost $2$, versus $4$ for reversal (solid). This local improvement is exactly where the conservation-weighted comparison below becomes nontrivial.}
\label{fig:comprev}
\end{figure}

\subsection{Hamming Weight and Lexicographic Optimality}

Define the \emph{weight} of a hexagram by
\[
w(h)=\#\{i:h(i)=1\}.
\]
In the I Ching interpretation, this counts the number of yang lines, so Hamming-weight preservation may be viewed as preservation of yin-yang balance. Formally, it is Hamming weight. Reversal preserves this quantity, since it only permutes the six positions:
\[
w(\rev(h))=w(h).
\]
Complement instead sends weight $w$ to $6-w$, and $\comp\circ\rev$ does the same. Thus complement and $\comp\circ\rev$ preserve Hamming weight only at weight $3$.

\begin{proposition}[Forced Failure of Weight Preservation]\label{prop:forcedviolations}
The reverse-priority matching fails to preserve Hamming weight exactly at the 8 palindromic endpoints, equivalently on the 4 palindrome pairs. These failures are unavoidable for any nontrivial $K_4$-equivariant matching.
\end{proposition}

\begin{proof}
If $h$ is not a palindrome, the reverse-priority rule pairs $h$ with $\rev(h)$, which preserves Hamming weight. If $h$ is a palindrome, then $\rev(h)=h$, so reversal is not a valid matching edge. The only nontrivial equivariant partner in its orbit is $\comp(h)$; since a palindrome has paired mirror bits, its weight is even, hence not equal to $3$. Therefore $w(\comp(h))=6-w(h)\neq w(h)$. There are $2^3=8$ palindromic endpoints, grouped into 4 complement pairs.
\end{proof}

\begin{theorem}[Lexicographic Optimality]\label{thm:lex}
Among $K_4$-equivariant matchings on $\B^6$, the reverse-priority matching is selected by the following lexicographic criterion:
\[
\text{first minimize failures of Hamming-weight preservation, then minimize total Hamming cost.}
\]
Equivalently, after the unavoidable palindrome failures, reversal is forced or strictly preferred by the Hamming tie-breaker.
\end{theorem}

\begin{proof}
Proposition~\ref{prop:forcedviolations} gives the lower bound: the 4 palindrome pairs must fail to preserve Hamming weight, so the minimum number of pair-level failures is achieved by the reverse-priority matching. For a non-palindrome with $w(h)\neq 3$, complement and $\comp\circ\rev$ change weight, while reversal preserves it, so reversal is forced by the first criterion. For a non-palindrome with $w(h)=3$, complement also preserves Hamming weight, but has Hamming cost $6$, while reversal has cost at most $6$ and coincides with complement only in the anti-symmetric case. In the generic weight-$3$ case, one has $\Ham(h,\rev(h))=2$ and $\Ham(h,\comp(\rev(h)))=4$, so reversal also beats the diagonal option. Thus the Hamming tie-breaker selects reversal whenever a distinct choice remains.
\end{proof}

\subsection{Weighted Energy and a Phase Boundary}

The lexicographic statement can be softened to an ordinary one-parameter family of energies. For $\alpha,\beta>0$, define
\[
E_{\alpha,\beta}(h,h')=\alpha |w(h)-w(h')|+\beta\Ham(h,h').
\]
The first term penalizes failure of Hamming-weight preservation; the second is ordinary Hamming distance.

\begin{theorem}[Phase-Stable Optimality]\label{thm:phase}
On every size-4 orbit where $\Ham(h,\rev(h))=4$, the reverse edge has lower weighted energy than the $\comp\circ\rev$ edge exactly when $\alpha>\beta$:
\[
E_{\alpha,\beta}(h,\rev(h))
<
E_{\alpha,\beta}(h,\comp(\rev(h))).
\]
At $\alpha=\beta$ the two energies are equal.
\end{theorem}

\begin{proof}
On such an orbit, reversal preserves Hamming weight and has Hamming cost $4$, so
\[
E_{\alpha,\beta}(h,\rev(h))=4\beta.
\]
The $\comp\circ\rev$ edge has Hamming cost $2$. Since $\Ham(h,\rev(h))=4$, exactly two of the three mirror pairs disagree; the remaining equal pair contributes either two zeros or two ones. Hence $w(h)\in\{2,4\}$, so $\comp\circ\rev$ changes the weight by $2$. Therefore
\[
E_{\alpha,\beta}(h,\comp(\rev(h)))=2\alpha+2\beta.
\]
The inequality $4\beta<2\alpha+2\beta$ is equivalent to $\beta<\alpha$, and equality occurs at $\alpha=\beta$.
\end{proof}

\begin{figure}[ht]
\centering
\begin{tikzpicture}[
    axis/.style={-{Stealth}, thick},
    regionlabel/.style={font=\small, align=center},
    boundary/.style={thick, dashed},
    dot/.style={circle, fill, inner sep=1.7pt}
]
\fill[blue!8] (0,0) -- (5.4,5.4) -- (5.4,6.1) -- (0,6.1) -- cycle;
\fill[red!8] (0,0) -- (5.4,0) -- (5.4,5.4) -- cycle;
\draw[axis] (0,0) -- (6.0,0) node[right] {$\beta$};
\draw[axis] (0,0) -- (0,6.0) node[above] {$\alpha$};
\draw[boundary] (0,0) -- (5.5,5.5) node[pos=0.72, above left, font=\small] {$\alpha=\beta$};
\node[regionlabel] at (2.0,4.7) {King Wen / $\rev$\\Hamming-weight stable};
\node[regionlabel] at (4.2,1.4) {$\comp\circ\rev$\\lower Hamming cost};
\node[dot, blue!60!black] at (2.2,4.0) {};
\node[dot, red!60!black] at (4.2,2.0) {};
\node[font=\small, anchor=west] at (0.25,-0.35) {$E_{\alpha,\beta}=\alpha|\Delta w|+\beta d_H$};
\end{tikzpicture}
\caption{The weighted comparison on the only contentious orbit type. Above the line $\alpha=\beta$, Hamming-weight preservation is sufficiently valuable and the reverse-priority/King Wen choice is strictly preferred. Below it, pure Hamming savings favor $\comp\circ\rev$.}
\label{fig:phase}
\end{figure}

\begin{corollary}[Cost Values]
The total Hamming cost of the reverse-priority matching:
\begin{itemize}
    \item Palindrome pairs: $4 \times 6 = 24$
    \item Anti-symmetric pairs: $4 \times 6 = 24$
    \item Generic pairs: $12 \times 2 + 12 \times 4 = 72$
\end{itemize}
Total: $24 + 24 + 72 = 120$.

For comparison, the complement-only matching has cost $32 \times 6 = 192$.
\end{corollary}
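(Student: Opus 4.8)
The plan is to decompose the total cost along the three orbit types from Section~2, compute each type's contribution by counting pairs and Hamming distances, and add. Since both $\rev(h)$ and $\comp(h)$ lie in $\mathrm{orbit}(h)$, the reverse-priority matching pairs strictly within $K_4$-orbits, so $\mathrm{Cost}$ splits as a sum over orbits.

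First I would record the identity $\Ham(h, \rev(h)) = 2 \cdot \#\{i \in \{0,1,2\} : h(i) \neq h(5-i)\}$. It holds because $\rev$ swaps position $i$ with position $5-i$: a disagreeing pair $\{i, 5-i\}$ contributes $2$ to the Hamming distance and an agreeing pair contributes $0$. Hence $\Ham(h, \rev(h)) \in \{0, 2, 4, 6\}$, equal to $0$ precisely for palindromes and to $6$ precisely for anti-symmetric hexagrams (Lemma~\ref{lem:noconflict}); generic hexagrams therefore have distance $2$ or $4$.

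Next I would dispatch the size-$2$ orbits. The $4$ palindrome orbits each contribute the single pair $\{h, \comp(h)\}$ at distance $6$ ($4 \times 6 = 24$), and the $4$ anti-symmetric orbits each contribute $\{h, \rev(h)\} = \{h, \comp(h)\}$, also at distance $6$ (another $24$). On a size-$4$ generic orbit the rule yields two pairs, $h \leftrightarrow \rev(h)$ and $\comp(h) \leftrightarrow \comp(\rev(h))$; because Hamming distance is complement-invariant, both have distance $\Ham(h, \rev(h))$, so the orbit contributes $2\,\Ham(h, \rev(h)) \in \{4, 8\}$.

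The crux is counting how many of the $12$ generic orbits sit at distance $2$ versus $4$. I would parametrize a hexagram by its three reversal-pairs $(h(0),h(5)),\,(h(1),h(4)),\,(h(2),h(3))$ and the disagreement indicators $d_j \in \{0,1\}$. Each pattern $(d_0,d_1,d_2)$ is realized by exactly $2^3 = 8$ hexagrams (two bit-choices per pair) and gives $\Ham(h,\rev(h)) = 2(d_0+d_1+d_2)$. Generic hexagrams are those with $d_0+d_1+d_2 \in \{1,2\}$: there are $\binom{3}{1}\cdot 8 = 24$ at sum $1$ (distance $2$) and $\binom{3}{2}\cdot 8 = 24$ at sum $2$ (distance $4$). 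Since $\comp$ and $\rev$ each preserve every $d_j$, all four members of a generic orbit share the same disagreement count, so these hexagrams split into $6$ distance-$2$ orbits and $6$ distance-$4$ orbits. These contribute $6 \times 4 = 24$ and $6 \times 8 = 48$ respectively, giving $72$ for generic pairs, and $24 + 24 + 72 = 120$ overall; the complement-only matching costs $32 \times 6 = 192$ by Proposition~\ref{prop:distances}(1). The one step needing care is verifying the constant-disagreement property of generic orbits, which reduces to the direct check that complementing or reversing leaves each indicator $d_j$ unchanged.
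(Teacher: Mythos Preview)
Your argument is correct. The paper itself offers no proof for this corollary beyond the arithmetic displayed in the statement; the underlying split of the $24$ generic pairs into $12$ at distance~$2$ and $12$ at distance~$4$ is not derived there but is instead absorbed into the Lean \texttt{decide} verification of Theorem~\ref{thm:equivariant}. Your route is genuinely different and more informative: parametrizing hexagrams by the disagreement vector $(d_0,d_1,d_2)$ and observing that both $\comp$ and $\rev$ fix each $d_j$ turns the $12$/$12$ split into a binomial count $\binom{3}{1}\cdot 8 = \binom{3}{2}\cdot 8 = 24$ hexagrams, hence $6$ orbits of each type. This buys a human-checkable proof that does not depend on enumeration, and it makes transparent \emph{why} the split is even---a fact the paper simply asserts. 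The paper's approach buys brevity and machine certification. Your care point (that the $d_j$ are $K_4$-invariant, so orbit members share a common reversal distance) is exactly the right thing to flag, and your verification of it is sound.
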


\subsection{Uniqueness}

\begin{theorem}[Uniqueness]\label{thm:unique}
Any equivariant matching satisfying the reverse-priority rule equals the priority partner function.
\end{theorem}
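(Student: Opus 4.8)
The plan is to treat a perfect matching as a fixed-point-free involution $\sigma \colon \B^6 \to \B^6$, where $\sigma(h)$ denotes the partner of $h$, and to show that the reverse-priority rule forces $\sigma = \mathrm{partner}$. Under this identification, the hypothesis that a matching ``satisfies the reverse-priority rule'' means precisely that for every $h$ the partner $\sigma(h)$ respects the case split defining $\mathrm{partner}$: namely $\sigma(h) = \comp(h)$ when $h$ is a palindrome and $\sigma(h) = \rev(h)$ otherwise. The content of the theorem is therefore twofold --- first that $\mathrm{partner}$ genuinely defines a perfect matching, and second that this matching is the only one compatible with the rule.

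First I would verify that $\mathrm{partner}$ is fixed-point-free. For a palindrome $h$ we have $\mathrm{partner}(h) = \comp(h)$, and $\comp(h) = h$ is impossible since it would force $h(i) = 1 - h(i)$; for a non-palindrome $h$ we have $\mathrm{partner}(h) = \rev(h) \neq h$ by the very definition of palindrome. Combining this with the involution property already established in Theorem~\ref{thm:involution}, the orbits of $\mathrm{partner}$ are exactly the two-element sets $\{h, \mathrm{partner}(h)\}$, and these partition $\B^6$ into $32$ disjoint pairs. This exhibits $M_{\mathrm{partner}} = \{\{h, \mathrm{partner}(h)\} : h \in \B^6\}$ as a bona fide perfect matching.

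For uniqueness I would argue pointwise. Fix any matching $\sigma$ obeying the rule and any $h \in \B^6$. If $h$ is a palindrome, then $\rev(h) = h$ makes the only admissible non-trivial option $\comp(h)$, so $\sigma(h) = \comp(h) = \mathrm{partner}(h)$; if $h$ is not a palindrome, the rule mandates $\sigma(h) = \rev(h) = \mathrm{partner}(h)$. Since $h$ was arbitrary, $\sigma = \mathrm{partner}$ as functions, and hence the two matchings coincide. The case split is self-consistent precisely because palindromicity is preserved under taking partners --- complement sends palindromes to palindromes and reversal sends non-palindromes to non-palindromes --- which is exactly what Theorem~\ref{thm:involution} records.

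The step I expect to be the genuine crux is not any calculation but the consistency check: a priori, assigning $h \mapsto \rev(h)$ and $h' \mapsto \comp(h')$ by separate local rules could conflict, in that the partner of $h$'s partner need not return to $h$. That this never happens is precisely the fixed-point-free involution property, so the theorem is ultimately a clean corollary of Theorem~\ref{thm:involution} together with the no-conflict structure of Lemma~\ref{lem:noconflict}. Combined with the cost-minimization of Theorem~\ref{thm:optimal}, this completes the uniqueness half of Theorem~\ref{thm:main}.
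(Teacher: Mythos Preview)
Your proposal is correct and follows essentially the same approach as the paper: the rule deterministically specifies the partner at each hexagram, and Theorem~\ref{thm:involution} certifies that this assignment is a valid perfect matching. Your write-up is considerably more detailed than the paper's two-sentence proof (you explicitly verify the fixed-point-free property and spell out the pointwise uniqueness argument), but the underlying structure is identical.
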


\begin{proof}
The reverse-priority rule uniquely specifies the partner at each hexagram. By Theorem~\ref{thm:involution}, this defines a valid involution, hence a perfect matching.
\end{proof}

\begin{corollary}[King Wen is Canonical]
The King Wen sequence encodes the unique cost-minimizing $\comp/\rev$ equivariant matching on $\B^6$.
\end{corollary}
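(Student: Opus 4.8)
The plan is to treat this corollary as a bookkeeping consequence of two facts already in hand: the uniqueness of the reverse-priority matching as the cost-minimizing $\comp/\rev$ matching (Theorems~\ref{thm:optimal} and~\ref{thm:unique}), and the explicit structural description of the King Wen pairing (Theorem~\ref{thm:equivariant}). The substance of the argument is to verify that the King Wen matching \emph{is} the reverse-priority matching---not merely isomorphic to it in the sense of Theorem~\ref{thm:iching}, but equal as a matching on $\B^6$---so that the uniqueness result transfers to it directly.

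First I would recall from Theorem~\ref{thm:unique} that the reverse-priority matching is the unique cost-minimizer among $\comp/\rev$ equivariant matchings. It then suffices to show that the King Wen pairing coincides pointwise with the priority partner function. To do this I would invoke the trichotomy of Theorem~\ref{thm:equivariant} and check each of its three blocks against the reverse-priority rule. For the $4$ palindrome pairs, King Wen uses $\comp$; since palindromes satisfy $h = \rev(h)$, the rule also prescribes $\comp(h)$, so the two agree. For the $24$ generic pairs, King Wen uses $\rev$; these hexagrams are non-palindromes (their orbits have size $4$), so the rule prescribes $\rev(h)$, again in agreement.

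The one case demanding care is the $4$ anti-symmetric pairs, where $\rev(h) = \comp(h)$ and King Wen pairs by reversal. The point to keep straight is that the reverse-priority rule branches on \emph{whether $h$ is a palindrome}, not on which operation the partner happens to coincide with. Since anti-symmetric hexagrams satisfy $h \neq \rev(h)$, they fall into the non-palindrome branch, so the rule prescribes $\rev(h)$---which here equals $\comp(h)$ but is selected as the reversal partner. Hence the King Wen choice matches the rule's prescription here as well.

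I expect the main (and only) obstacle to be precisely this third case: one must not conflate ``paired by complement'' (the palindrome branch) with ``paired by reversal that happens to equal complement'' (the anti-symmetric branch), since both yield a partner at Hamming distance $6$ yet arise from different branches of the rule. Once this distinction is respected, all three blocks align, the King Wen matching equals the reverse-priority matching pointwise, and combining this equality with Theorem~\ref{thm:unique} yields the corollary. As with Theorem~\ref{thm:equivariant}, the pointwise coincidence is a finite check over the $32$ pairs and would be discharged by \texttt{decide} in Lean~4.
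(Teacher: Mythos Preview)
Your proposal is correct and matches the paper's (implicit) argument: the corollary is stated without proof immediately after Theorems~\ref{thm:optimal} and~\ref{thm:unique}, relying on the reader to combine those with Theorem~\ref{thm:equivariant} exactly as you do. Your explicit treatment of the anti-symmetric case---distinguishing ``paired by $\rev$ which happens to equal $\comp$'' from the palindrome branch---is more careful than anything the paper spells out, but it is the right point to check and your handling of it is sound.
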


\section{Discussion}

\subsection{Orbit Structure}

The $K_4$-action on $\B^6$ partitions the 64 elements into orbits of size 2 or 4:
\begin{itemize}
    \item \textbf{Size-4 orbits}: 12 orbits containing 48 generic elements
    \item \textbf{Size-2 palindrome orbits}: 4 orbits containing 8 palindromes
    \item \textbf{Size-2 anti-symmetric orbits}: 4 orbits containing 8 anti-symmetric elements
\end{itemize}

Within each orbit, the equivariant matching must pair elements. For size-4 orbits, there are three equivariant pairings induced by the non-identity involutions $\comp$, $\rev$, and $\comp \circ \rev$. The reverse-priority rule uses $\rev$ (falling back to $\comp$ for palindromes), which minimizes cost among $\comp/\rev$ matchings. Proposition~\ref{prop:k4hamming96} gives the pure Hamming minimum cost of 96 for the full $K_4$ action, but that optimum lies on the other side of the Hamming-weight phase boundary in Theorem~\ref{thm:phase}.

\subsection{Uniqueness Mechanism}

The key to uniqueness is the No-Conflict Lemma~\ref{lem:noconflict}: $\comp$ and $\rev$ never genuinely compete. When they have different costs, $\rev$ wins; when they have equal cost, they coincide. This eliminates all apparent degrees of freedom, making greedy optimization globally consistent.

\subsection{Formal Verification}

The finite parts of the argument are machine-checked in Lean~4 using the Mathlib library:
\begin{itemize}
    \item \texttt{decide} tactic for computational verification of all 32 pairs
    \item Constructive proof that the reverse-priority function is an involution
    \item finite orbit, parity, and Hamming-weight preservation case analyses
    \item verification of the weighted comparison and the boundary $\alpha=\beta$
\end{itemize}

\subsection{Subgroup Matchings}

One may ask whether the theorem holds for subgroups of $K_4$:
\begin{itemize}
    \item \textbf{Complement-only} ($\langle \comp \rangle$): Valid perfect matching with cost 192, trivially unique since each hexagram has exactly one complement.
    \item \textbf{Reversal-only} ($\langle \rev \rangle$): \emph{Not} a valid perfect matching. The 8 palindromes satisfy $\rev(h) = h$, so they cannot pair with their own reversal. A perfect matching requires pairing with a \emph{different} element.
\end{itemize}
Thus the reverse-priority rule is the minimal hybrid that produces a valid matching: use $\rev$ whenever possible, fall back to $\comp$ for the 8 palindromes.

\subsection{Extensions}

The $K_4$-action generalizes to $\B^n$ for any $n$, and the orbit structure and matching problem remain well-defined. We do not claim here a complete general-$n$ optimality theorem. The arithmetic of mirror pairs nevertheless suggests where new phenomena may occur. In even dimension $n=2m$, a nondegenerate tie between $\rev$ and $\comp\circ\rev$ would require a mirror-pair profile satisfying the simultaneous conditions $\Ham(h,\rev(h))=m$ and $w(h)=m$. This profile exists exactly when $8$ divides $n$. Thus $n=6$ lies on the rigid side of this arithmetic obstruction: the potentially ambiguous tie profiles are absent.

\appendix

\section{Full King Wen Binary Table}

\begin{center}
\small
\begin{tabular}{cc|cc|cc|cc}
\toprule
KW & Bin & KW & Bin & KW & Bin & KW & Bin \\
\midrule
1 & 63 & 17 & 25 & 33 & 60 & 49 & 29 \\
2 & 0 & 18 & 38 & 34 & 15 & 50 & 46 \\
3 & 17 & 19 & 3 & 35 & 40 & 51 & 9 \\
4 & 34 & 20 & 48 & 36 & 5 & 52 & 36 \\
5 & 23 & 21 & 41 & 37 & 53 & 53 & 52 \\
6 & 58 & 22 & 37 & 38 & 43 & 54 & 11 \\
7 & 2 & 23 & 32 & 39 & 20 & 55 & 13 \\
8 & 16 & 24 & 1 & 40 & 10 & 56 & 44 \\
9 & 55 & 25 & 57 & 41 & 35 & 57 & 54 \\
10 & 59 & 26 & 39 & 42 & 49 & 58 & 27 \\
11 & 7 & 27 & 33 & 43 & 31 & 59 & 50 \\
12 & 56 & 28 & 30 & 44 & 62 & 60 & 19 \\
13 & 61 & 29 & 18 & 45 & 24 & 61 & 51 \\
14 & 47 & 30 & 45 & 46 & 6 & 62 & 12 \\
15 & 4 & 31 & 28 & 47 & 26 & 63 & 21 \\
16 & 8 & 32 & 14 & 48 & 22 & 64 & 42 \\
\bottomrule
\end{tabular}
\end{center}

Binary values represent the hexagram as a 6-bit integer, with bit 0 as the bottom line.

\end{document}